\theoremstyle{plain}
\newtheorem{theorem}{Theorem}
\newtheorem{proposition}[theorem]{Proposition}
\newtheorem{lemma}[theorem]{Lemma}
\theoremstyle{definition}
\theoremstyle{remark}
\def\R{\mathbb{R}}	
\renewcommand{\geq}{\geqslant}
\let\ker\relax
\DeclareMathOperator{\ker}{Ker}
\def\cA{\mathcal{A}}
\def\hcA{\hat{\cA}}
\def\cF{\mathcal{F}}
\def\hcF{\hat{\cF}}
\def\d{\partial}
\begin{document}

\title[Bi-Hamiltonian cohomology, scalar case]{The bi-Hamiltonian cohomology of a scalar Poisson pencil}
\author{Guido Carlet}
\author{Hessel Posthuma}
\author{Sergey Shadrin}
\address{Korteweg-de Vries Instituut voor Wiskunde, 
Universiteit van Amsterdam, Postbus 94248,
1090GE Amsterdam, Nederland}
\email{g.carlet@uva.nl, h.b.posthuma@uva.nl, s.shadrin@uva.nl}
\begin{abstract}
We compute the bi-Hamiltonian cohomology of an arbitrary dispersionless Poisson pencil in a single dependent variable using a spectral sequence method. As in the KdV case, we obtain that $BH^p_d(\hcF, d_1,d_2)$ is isomorphic to $\R$ for $(p,d)=(0,0)$, to $C^\infty (\R)$ for $(p,d)=(1,1)$, $(2,1)$, $(2,3)$, $(3,3)$, and vanishes otherwise.
\end{abstract}
\subjclass[2010]{Primary 53D17; Secondary 37K10 58A20}
\maketitle


\section{Introduction}

We consider evolutionary integrable systems of PDEs in one spatial variable that possess a bi-Hamiltonian structure, i.e., a compatible pair of Poisson brackets on the space of local functionals. One of the main tools in the classification of this kind of integrable systems, up to the action of the Miura group, is the deformation theory of the dispersionless limit of the pencil of Poisson structures. Under the extra assumption of semi-simplicity, the deformation theory of a pencil of Poisson structures is controlled by their bi-Hamiltonian cohomology. This concept was introduced in~\cite{dz01,lz05,dlz06}, and further intensively studied in the last decade. We refer the reader also to~\cite{al11,al12,b08,cps14,cps15, lz11, lz13,l02} for different aspects of the definition, usage, and computation of the bi-Hamiltonian cohomology. 

In particular, for semi-simple pencils of hydrodynamic Poisson structures, the deformation space is computed in~\cite{dlz06}, where the corresponding functional parameters are called central invariants, and the obstruction space is shown to be equal to zero in~\cite{cps15}. (For the case of the dispersionless KdV Poisson pencil the vanishing of the obstructions was proved in the paper~\cite{lz13}, in which important developments of the formalism were introduced, see also~\cite{lz11}). This implies that for a given semi-simple Poisson pencil of hydrodynamic type and for a given set of central invariants there exist a dispersive deformation, which is unique up to the action of the Miura group. 

Despite many recent developments in our understanding of bi-Hamiltonian cohomology, the full description of the bi-Ha\-mil\-tonian cohomology is known only for the case of the Poisson pencil of the dispersionless KdV hierarchy (it was conjectured in~\cite{lz13} and proved in~\cite{cps14}). In particular, it is clear that the spectral sequences constructed in general case in~\cite{cps15} are not suitable for this purpose, and new computational ideas are required. Especially disappointing here is that, in the general semi-simple case, there is no uniform computation of the space of deformations and the space of obstructions: these two spaces are computed in~\cite{dlz06} and in~\cite{cps15} via completely different methods. 

In this paper we study the case of a hydrodynamic Poisson pencil in one dependent variable, and we compute at once its full bi-Hamiltonian cohomology. Since this paper is more of technical nature, we assume the reader to be already familiar with the standard notations and we refer to~\cite{dz01} and~\cite{lz13} for an extended introduction to this topic. 

\subsection{Basic notions}

A Poisson bracket of hydrodynamic type in a single dependent variable is of the form
\begin{equation}
\{ u(x) , u(y) \} = g(u(x)) \delta'(x-y) + \frac12 g'(u(x)) u_x(x) \delta(x-y) 
\end{equation}
where $g(u)$ is a smooth non-vanishing function on $\R$.

A pair of compatible Poisson brackets of hydrodynamic type in one variable is always semisimple since we can introduce the canonical coordinate as $u = g_2(u) / g_1(u)$. An arbitrary pencil of Poisson brackets of hydrodynamic type $\{ u(x) , u(y) \}_\lambda = \{ u(x) , u(y) \}_2 - \lambda \{ u(x) , u(y) \}_1$ is then of the form
\begin{equation}\label{eq:pencil}
\{ u(x) , u(y) \}_\lambda = 
(u-\lambda) g(u) \delta'(x-y) + \frac12 ((u-\lambda)g(u))' u_x \delta(x-y) .
\end{equation}

We consider the quotient of the space of the dispersive deformations of $\{ u(x) , u(y) \}_\lambda$ modulo the action of the Miura group that preserves the leading term (the so-called pro-unipotent subgroup of the Miura group, that is, the group of the Miura transformations close to identity~\cite{dz01}).

There are two important vector spaces associated with this problem. One of them is the space of  so-called central invariants. One can assign an element of this space to a deformation of the pencil of Poisson brackets. If two deformations are in the same Miura equivalence class, then they have the same central invariant. The other space is the space of obstructions. If this space is equal to zero, then one can show that any infinitesimal dispersive deformation can be extended to a full deformation. Both spaces are instances of the so-called bi-Hamiltonian cohomology associated with the pencil. 

Let us define the bi-Hamiltonian cohomology. The most convenient way to do that is to switch to the $\theta$-formalism, see~\cite{g02,dz01,lz13}. Consider the algebra
$$
\hcA:=C^\infty(\R)[[u^1,u^2,\dots; \theta=\theta^0,\theta^1,\theta^2,\dots]],
$$
where $u^i$ are some formal even variables and $\theta^i$ are some formal odd variables. An element of $C^\infty(\R)$ is a function of the variable $u=u^0$. There are two gradations, $d$ and $p$. The degree  $d$, called standard degree, is defined by assigning $\deg u^i=\deg \theta^i=i$, and the degree $p$ is defined by assigning $\deg u^i=0$, $\deg \theta^i=1$. The subspace of bidegree $(d,p)$ is denoted by $\hcA^p_d$.

By $\d$ we denote the standard derivation on $\hcA$, $\d:=\sum_{s=0}^\infty (u^{s+1}\d_{u^s}+\theta^{s+1}\d_{\theta^s})$. The space of local functional vector fields is the quotient $\hcF=\hcA/\partial\hcA$. Both gradations, $d$ and $p$, descend to the space $\hcF$.

We define two operators on the space $\hcA$ associated to the brackets given in Equation~\eqref{eq:pencil}:
\begin{align*}
D_1& := \sum_{s=0}^\infty \partial^s \left(g\theta^1+\frac{1}{2} \partial_u g u^1 \theta\right) \partial_{u^s} + \partial^s \left(\frac{1}{2} \partial_u g \theta\theta^1\right) \partial_{\theta^s}, \\
D_2& := \sum_{s=0}^\infty \partial^s \left(ug\theta^1+\frac{1}{2} \partial_u (ug) u^1 \theta\right) \partial_{u^s} + \partial^s \left(\frac{1}{2} \partial_u (ug) \theta\theta^1\right) \partial_{\theta^s}.
\end{align*}

These operators commute with $\partial$, so they induce some operators $d_1$ and $d_2$ on the space $\hcF$. The bi-Hamiltonian cohomology in degree $(p,d)$ is the space 
$$
BH^p_d(\hcF,d_1,d_2):=\frac{\ker d_1 \cap \ker d_2 \cap \hcF^p_d} {d_1d_2 \hcF^{p-2}_{d-2}}.
$$

The equivalence classes of dispersive deformations are controlled by the space $BH^2_{\geq 2}(\hcF,d_1,d_2)$. The space of obstructions is the space $BH^3_{\geq 2\delta}(\hcF,d_1,d_2)$, where $\delta$ is the lowest degree $d$ of a non-trivial class in $BH^2_{\geq 2}(\hcF,d_1,d_2)$. Our main result in this paper is Theorem~
\ref{thm:main-theorem}, where we compute completely the bi-Hamiltonian cohomology $BH^p_d(\hcF,d_1,d_2)$.

\subsection{Organization of the paper}

In the computation of the bi-Hamil\-to\-nian cohomology we follow the scheme proposed by Liu and Zhang in~\cite{lz13}. First, we compute the cohomology of $(\hcA[\lambda],D_2-\lambda D_1)$. We perform this computation in Section~\ref{sec:spectralsequence} by introducing a filtration of such complex and showing that the associated spectral sequence converges at the second page. Using that result, in Section~\ref{sec:bi-Hamiltonian}, we obtain $H(\hcF[\lambda],d_2-\lambda d_1)$ by a long exact sequence argument, and, using a lemma by Barakat-Liu-Zhang, we finally compute $BH^p_d(\hcF,d_1,d_2)$. Finally, in Section~\ref{sec:applications}, we discuss some applications of our results.

\section{Main computation} \label{sec:spectralsequence}
In this Section we compute the cohomology of the differential complex $(\hcA[\lambda],D_\lambda)$ with $D_\lambda=D_2-\lambda D_1$, i.e.,
\begin{multline}
D_\lambda = \sum_{s\geq0} \left[ \partial^s\left((u-\lambda) g(u) \theta^1 + \frac12 ( (u-\lambda) g(u) )' u^1 \theta \right) \frac{\partial }{\partial u^s} + \right.\\
 +\left. \frac12 \partial^s \left( ((u-\lambda)g(u))' \theta \theta^1 \right) \frac{\partial }{\partial \theta^s} \right] ,
\end{multline}
where the prime denotes the derivative w.r.t. $u$.

We define a filtration of $\hcA[\lambda]$ which is compatible with $D_\lambda$ and show that the associated spectral sequence collapses at the second page, from which we recover the cohomology.

\subsection{Filtration}

Denote by $\hcA^{(i)}\subset \hcA$ the subspace 
$C^\infty(\R)[[u^1,\dots, u^i, \theta^0,\dots,\theta^i]]$,
and by $\hcA^{[i]}$ the quotient 
$\hcA^{(i)}/\hcA^{(i-1)}$. An upper index $p$, resp. lower
index $d$, denotes, as usual, the homogeneous component of super degree $p$, resp. standard degree $d$.

Let us consider the decreasing filtration of the homogenous degree $d$ component of $\hcA[\lambda]$ defined by 
\begin{equation}
F^i \hcA_d[\lambda] = \hcA_d^{(d-i)}[\lambda], 
\end{equation}
which induces a bounded filtration $F^i \hcA[\lambda]$ of $\hcA[\lambda]$. 

\subsection{Spectral sequence: page zero}

The zeroth page of the associated spectral sequence $E_r^{*,*}$ is 
\begin{equation}
E_0^{p,q} = gr^p \hcA_{p+q}[\lambda] \cong \hcA_{p+q}^{[q]}[\lambda],
\end{equation}
with differential $d_0:E_0^{p,q} \to E_0^{p,q+1}$ given by
\begin{align}
d_0 = & \left( (u-\lambda) g(u) \theta^{q+1} + \frac12 ((u-\lambda) g(u))' u^{q+1} \theta \right) \frac{\partial }{\partial u^q} +
\\ \notag &
+ \frac12 ((u-\lambda) g(u))'\theta \theta^{q+1} \frac{\partial }{\partial \theta^q}  .
\end{align}
Clearly $E_0^{p,q}$ is non-zero only if $p,q \geq0$, i.e., we are dealing with a first-quadrant spectral sequence. Moreover, observe that $E_0^{p,q}$ vanishes for $q=0$, $p\geq1$. 

\subsection{Computing the first page}

By computing the cohomology of $E_0^{*,*}$ we obtain the first page of the spectral sequence. Denote by $\check{C}(\R)$ the space of smooth functions in $u$ quotiented by $\R[u] g^{3/2}(u)$.
\begin{lemma}
The first page of the spectral sequence is given by 
\begin{equation}
E_1^{p,q} = \begin{cases}
\R[\lambda] & p=q=0 \\
\check{C}(\R) \theta \theta^1 & p=0, q=1 \\
\hcA_p^{[q-1]} \theta \theta^q & p\geq1, q\geq2 
\end{cases}
\end{equation}
while all remaining entries are trivial. The differential $d_1:E_1^{p,q} \to E_1^{p+1,q}$ is given by 
\begin{equation}
d_1(f \theta \theta^q) = \left(D_\lambda (f)|_{\lambda=u}  + \frac{q-2}{2} fg \theta^1\right) \theta \theta^q
\in \frac{\hcA_{p+1}^{(q-1)}}{\hcA_{p+1}^{(q-2)}} \theta \theta^q
\end{equation}
for $p\geq1$, $q\geq2$, and zero otherwise.
\end{lemma}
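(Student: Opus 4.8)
The plan is to compute the cohomology of the zeroth page differential $d_0$ acting on each fixed column $E_0^{*,q}$, i.e.\ on the quotient spaces $\hcA^{[q]}_{p+q}[\lambda]$. The crucial observation is that $d_0$ acts "slotwise" on the top generators $u^q, \theta^q$: modulo $\hcA^{(q-1)}[\lambda]$ it sends $u^q \mapsto (u-\lambda)g\,\theta^{q+1} + \tfrac12((u-\lambda)g)' u^{q+1}\theta$ (landing in the next column's top slot) and $\theta^q \mapsto \tfrac12((u-\lambda)g)'\theta\theta^{q+1}$, while killing all lower variables and the coefficient functions. So $d_0$ is essentially a Koszul-type differential in the single pair of variables $(u^q,\theta^q)$ with coefficients in $\hcA^{(q-1)}_{\bullet}[\lambda]$, tensored appropriately. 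First I would treat the column $q=0$ separately: there $E_0^{p,0}$ is nonzero only for $p=0$, equals $\R[\lambda]$ (the constants, since the polynomial-in-$u$ part is killed — wait, no: $\hcA^{[0]} = C^\infty(\R)[\lambda]$), and $d_0$ on it is zero, but one must check the image from... actually $q=0$ has nothing mapping into it from the left, so $E_1^{0,0} = C^\infty(\R)[\lambda]/\mathrm{im}$; here I'd need the $d_0$ out of the $q=0$ column to be computed, noting $E_0^{p,0}=0$ for $p\geq 1$ forces the answer. Let me restructure: the main content is columns $q\geq 1$.

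For $q\geq 1$ I would set $h := (u-\lambda)g(u)$ and write $d_0$ as the operator $h\theta^{q+1}\d_{u^q} + \tfrac12 h' u^{q+1}\theta\,\d_{u^q} + \tfrac12 h'\theta\theta^{q+1}\d_{\theta^q}$ on $\hcA^{[q]}_{p+q}[\lambda]$. Since $h$ is a non-vanishing smooth function (times $(u-\lambda)$), the key algebraic step is a change of generators: replace $\theta^q$ by a suitable rescaling so that the $\theta^q$-part of $d_0$ becomes the naive Koszul differential $\theta^q \mapsto \theta\theta^{q+1}$ up to units, and then compute. Concretely, I expect that after this normalization the cohomology in the $u^q,\theta^q$ variables is: it vanishes unless there are no $u^q$'s (because $d_0$ on $u^q$ produces a $\theta^{q+1}$-term that can only be cancelled using more $u^q$'s or $\theta^q$'s, and a degree count in the top slot shows the complex in $u^q$ is acyclic except in the "no $u^q$" part), and on the "no $u^q$" part it reduces to the two-term complex $\mathbf{1} \xrightarrow{\theta^q\mapsto \mathrm{unit}\cdot\theta\theta^{q+1}} \theta^q$-line, whose cohomology is: kernel $=$ everything with a $\theta^q$ already present but... — more precisely the cohomology is generated by $\theta\theta^q$ with coefficients in $\hcA^{(q-1)}$, which after re-indexing the column gives $\hcA^{[q-1]}_p \theta\theta^q$ for $q\geq 2$. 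For $q=1$ the coefficient space $\hcA^{(0)} = C^\infty(\R)[\lambda]$ degenerates and the factor $(u-\lambda)$ together with the quotient by $\R[u]g^{3/2}$ (this is where $g^{3/2}$ enters, presumably from solving an ODE like $2h\phi' + h'\phi = 0$, i.e.\ $\phi \propto h^{-1/2}$, and tracking the $\lambda$-dependence) produces the $\check C(\R)\theta\theta^1$ answer; I would isolate this ODE computation as a small lemma.

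Finally I would identify the induced differential $d_1$ on $E_1^{p,q} = \hcA^{[q-1]}_p\theta\theta^q$. By the standard spectral sequence recipe, $d_1$ is computed by lifting a class $f\theta\theta^q$ to $\hcA^{(q-1)}_{p+q}[\lambda]$, applying the full $D_\lambda$, and projecting; the terms of $D_\lambda$ that do not lower filtration by exactly one beyond $d_0$ contribute. The dispersive part $D_\lambda$ acting on $f$ (which involves only $u^0,\dots,u^{q-1},\theta^0,\dots,\theta^{q-1}$) reproduces $D_\lambda(f)|_{\lambda=u}$ after the quotient that effectively sets $\lambda = u$ (this substitution is the shadow of how $E_1$ forgot $\lambda$ except in the corner), and the extra term $\tfrac{q-2}{2}fg\theta^1$ comes from commuting $D_\lambda$ past the chosen representative of $\theta\theta^q$ — specifically from the $\d_{\theta^{q-1}}$-type action and the rescaling unit introduced when normalizing $\theta^q$, the coefficient $q-2$ tracking the power to which the normalizing unit was raised. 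The main obstacle I anticipate is precisely this bookkeeping: getting the coefficient $\frac{q-2}{2}$ exactly right, since it requires carefully choosing the representative of the $E_1$-class, controlling the rescaling of $\theta^q$ by a power of $g$ (or $h^{-1/2}$), and keeping track of signs from the odd variables — a misplaced factor of $2$ or sign would be easy to make and hard to detect without a consistency check (e.g.\ against the known KdV answer $g = $ const, or against $d_1^2 = 0$).
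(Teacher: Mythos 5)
Your overall architecture (column\-/wise cohomology of a Koszul\-/type differential in the top pair $(u^q,\theta^q)$, an ODE of the form $2h\phi'+h'\phi=0$ producing the $g^{3/2}$ in the $(0,1)$ entry, and the standard lifting recipe for $d_1$) is the same as the paper's, but the proposal leaves out the steps that actually produce the stated answer. The central gap is the case $p\geq1$, $q\geq2$: after the kernel is identified, the essential content of the lemma is that the image of $d_0\colon E_0^{p,q-1}\to E_0^{p,q}$ cuts $\theta\theta^q\,\hcA_p^{(q-1)}[\lambda]$ down by exactly $(u-\lambda)\hcA_p^{(q-1)}[\lambda]+\hcA_p^{(q-2)}$. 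This is where the polynomial $\lambda$-dependence collapses (every class acquires a canonical representative with $\lambda=u$) \emph{and} where the extra quotient by $\hcA_p^{(q-2)}$ appears --- the latter coming from the residual freedom in the choice of the preimage component that was used to kill the non-$\theta\theta^q$ part of the kernel. Your phrase ``after re-indexing the column gives $\hcA^{[q-1]}_p\theta\theta^q$'' asserts the conclusion without deriving either of these two quotient relations; note also that both feed directly into the $d_1$ formula (the substitution $\lambda=u$ in $D_\lambda(f)|_{\lambda=u}$ is precisely the shadow of this quotient), so the gap propagates.

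Three further points. First, the $q=0$ column is handled incorrectly: $d_0$ does \emph{not} vanish on $E_0^{0,0}=\hcA_0[\lambda]$ (it differentiates in $u$ and acts on the $\theta$-component), and $E_1^{0,0}$ is a kernel, not a quotient; obtaining $\R[\lambda]$ requires checking that $(u-\lambda)g\,\partial_u f_1=\tfrac12((u-\lambda)g)'f_1$ has no nonzero solutions polynomial in $\lambda$ (substitute $f_1=(u-\lambda)^sh(u)+O((u-\lambda)^{s+1})$ and compare orders). Second, for $E_1^{0,1}$ the substitution $h=g^{1/2}t$ is the right move, but the step that actually yields $\check C(\R)=C^\infty(\R)/\R[u]g^{3/2}$ --- determining which expressions $g^{3/2}(-(u-\lambda)t'+\tfrac12 t)$ with $t$ polynomial in $\lambda$ are independent of $\lambda$, which forces $t_0^{(n+1)}=0$ and hence a \emph{polynomial} $t_0$ --- is deferred to ``tracking the $\lambda$-dependence''; without it you cannot distinguish $\R[u]g^{3/2}$ from, say, all of $C^\infty(\R)g^{3/2}$. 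Third, your proposed mechanism for the term $\tfrac{q-2}{2}fg\,\theta^1$ (a power of a normalizing unit used to rescale $\theta^q$) is not how it arises: no rescaling of $\theta^q$ is needed, and the term comes from the component of $D_\lambda$ acting on the odd variables $\theta$ and $\theta^q$ of the representative $f\theta\theta^q$, evaluated at $\lambda=u$ after corestriction to the quotient; you correctly flag this coefficient as the point you cannot verify, and as stated it is not established.
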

\begin{proof}
For $q=0$ we only have to consider the case $p=0$. Let $f = f_0(u,\lambda) + \theta f_1(u,\lambda)$ be arbitrary element in $E_0^{0,0} = \hcA_0[\lambda]$. Its image under the differential $d_0$ is given by 
\begin{align}\label{eq:image-in-0-1}
d_0(f) = & \left( (u-\lambda) g(u) \theta^{1} + \frac12 ((u-\lambda) g(u))' u^{1} \theta \right) \frac{\partial f_0(u,\lambda)}{\partial u} 
\\ \notag
& +\theta\theta^1 \left(-(u-\lambda)g(u) \frac{\d f_1(u,\lambda)}{\d u}+\frac12 ((u-\lambda)g(u))' f_1(u,\lambda)  \right) .
\end{align}
The equation $d_0 f = 0$ is satisfied iff 
\begin{equation}
\frac{\partial f_0}{\partial u} = 0 
\end{equation}
which implies $f_0 \in \R[\lambda]$, and
\begin{equation}
(u-\lambda)g(u) \frac{\partial f_1}{\partial u} = \frac12 ((u-\lambda)g(u))' f_1 ,
\end{equation}
which does not have nontrivial solutions polynomial in $\lambda$, as one can easily see by substituting $f_1 = (u-\lambda)^s h(u) + O(u-\lambda)^{s+1}$. Hence $E_1^{0,0} = \R[\lambda]$.

For $p\geq0$ and $q\geq1$ one can show by direct computation that
\begin{multline} \label{kern}
\ker ( d_0: E_0^{p,q} \to E_0^{p,q+1} ) =\\ = \left( (u-\lambda) g(u) \theta^q + \frac12 ((u-\lambda)g(u))' u^q \theta \right) \hcA_p^{(q-1)} [\lambda] 
+ \theta \theta^q \hcA_p^{(q-1)} [\lambda] .
\end{multline}
Notice that the case $q=1$ the kernel can be nonzero only for $p=0$, hence $E_1^{p,1}=0$ for $p\geq1$.  

Let us prove~\eqref{kern}. Let $f\in \hcA^{[q]}_{p+q}[\lambda]$ and $d_0 f=0$. As a first step notice that $f$ has to be of the form
\begin{equation}
f = \theta^q f_1 + \theta \theta^q f_2 + \theta u^q f_3 + u^q f_4
\end{equation}
for $f_i \in \hcA^{(q)}_p[\lambda]$ independent of $\theta$ and $\theta^q$. Notice that the coefficient of $u^{q+1} \theta^q$ in $d_0 f =0$ implies that $f_1$ is independent of $u^q$, and, consequently, the coefficient of $u^{q+1}$ implies that $f_4$ is zero. Considering the coefficient of $\theta^{q+1} \theta^q$ in $d_0 f =0$ we get that $f_2$ is independent of $u^q$. Finally the coefficient of $\theta^{q+1} \theta$ gives the equation
\begin{equation}
(u - \lambda) g(u) \frac{\partial ( u^q f_3)}{\partial u^q} = \frac12 ((u-\lambda) g(u))' f_1,
\end{equation}
which clearly implies that $f_3$ is independent of $u^q$, too. We can conclude that $f$ has the form
\begin{equation}
f = \theta^q f_1 + \theta \theta^q f_2 + \theta u^q f_3 
\end{equation}
for $f_i \in \hcA_p^{(q-1)}[\lambda]$ independent of $\theta$. 
Then $d_0f=0$ iff $(u-\lambda) g(u) f_3 = \frac12 ((u-\lambda) g(u))' f_1$. Then $f_1|_{\lambda=u}=0$, hence $f_1 = (u-\lambda) g(u) f_4$ for some $f_4\in \hcA_p^{(q-1)}[\lambda]$, and consequently $f_3=\frac12 ((u-\lambda)g(u))' f_4$. Letting $f_i$ depend on $\theta$ does not change the image, hence~\eqref{kern} is proved. 
%
%
%
%

%
To compute $E_1^{0,1}$ we have to take the quotient the kernel given by Equation~\eqref{kern} for $p=0$, $q=1$ by the image given by~\eqref{eq:image-in-0-1}. It is obvious that choosing $f_1=0$ in~\eqref{eq:image-in-0-1} and an arbitrary $f_0$, we span the first summand in~\eqref{kern}. So, we have to quotient the second summand in~\eqref{kern}, which is in this case $\theta\theta^1 C^\infty(\R)[\lambda]$, by the space of functions 
\begin{equation}\label{eq:0-1-case-essential-image}
\theta\theta^1 \left(-(u-\lambda)gh'+\frac12 ((u-\lambda)g)' h  \right),
\quad h\in C^\infty(\R)[\lambda].
\end{equation}
In order to do that, we substitute $h=g^{1/2}t$. This turns~\eqref{eq:0-1-case-essential-image} into
\begin{equation}
\label{eq:0-1-case-simplified-image}
\theta\theta^1 g^{\frac 32} \left(-(u-\lambda)t'+\frac12 t  \right),
\quad t\in C^\infty(\R)[\lambda].
\end{equation} 
The quotient of $\theta\theta^1 C^\infty(\R)[\lambda]$ modulo this space is equal to the quotient of $\theta\theta^1 C^\infty(\R)$ modulo the space of functions of the form~\eqref{eq:0-1-case-simplified-image} that do not depend on $\lambda$. Let us determine this space.

Since $t$ is a polynomial in $\lambda$, we can represent it as $t=\sum_{i=0}^n (u-\lambda)^i t_i$, $t_i\in C^\infty(\R)$. The condition that $-(u-\lambda)t'+t/2$ does not depend on $\lambda$ is equivalent to the set of equations
\begin{equation}\label{eq:condition-independent-lambda}
t_i' = \left(i+\frac 12\right) t_{i+1},\quad i=0,1,\dots,n,
\end{equation}
where we assume $t_{n+1}=0$. If this condition is satisfied, then $-(u-\lambda)t'+t/2=t_0/2$, and~\eqref{eq:condition-independent-lambda} implies that $t_0^{(n+1)}=0$. This means that $t_0/2$ is a polynomial in $u$. So, the space of functions of the form~\eqref{eq:0-1-case-simplified-image} that do not depend on $\lambda$ is equal to $\theta\theta^1 g^{3/2} \R[u]$.  This implies that 
$E^{0,1}_1 = \check C(\R)\theta\theta^1$.


For $p\geq0$ and $q\geq2$ the image of $d_0 : E_0^{p,q-1} \to E_0^{p,q}$ is given by elements of the form
\begin{multline} \label{im}
\left( (u-\lambda) g(u) \theta^q + \frac12 ((u-\lambda)g(u))' u^q \theta \right) \frac{\partial h_0}{\partial u^{q-1}}  \\
+ \theta^q \theta \left( (u-\lambda) g(u) \frac{\partial h_1}{\partial u^{q-1}} - \frac12 ((u-\lambda)g(u))' \frac{\partial h_0}{\partial \theta^{q-1}} \right)   
\end{multline}
for $h_0, h_1 \in \hcA_{p+q-1}^{[q-1]}[\lambda]$, independent of $\theta$.

For $p=0$ and $q\geq2$ the two arbitrary elements of $\hcA_0[\lambda]$ appearing in~\eqref{kern} can be killed by~\eqref{im},  using the fact that in this case both $h_0$ and $h_1$ are linear in $u^{q-1}$, $\theta^{q-1}$. This gives $E_1^{0,q}=0$ for $q\geq2$.

Finally we compute $E_1^{p,q}$ for $p\geq1$, $q\geq2$. Observe that choosing an appropriate $h_0$ we can use the first summand in~\eqref{im} kill completely the first summand in~\eqref{kern}. The choice of $h_0$ has some freedom, namely, it is fixed up to $u^{q-1}$-independent term $\tilde h_0$.  Now, the second summand in~\eqref{im} can be rewritten as 
\begin{equation} \label{eq:im-secondsummand}
 \theta^q \theta \left( (u-\lambda) g \frac{\partial h_1}{\partial u^{q-1}} - (u-\lambda) \frac {g'}2  \frac{\partial h_0}{\partial \theta^{q-1}} + \frac{1}{2}g \frac{\d h_0}{\d \theta^{q-1}}\right).   
\end{equation}
The constant term of this expression in $(u-\lambda)$ is given by the third summand that is fixed up to $\frac g2 \frac{\d \tilde h_0}{\d \theta^{q-1}}$, where $\frac{\d \tilde h_0}{\d \theta^{q-1}}$ (and, therefore, $\frac g2 \frac{\d \tilde h_0}{\d \theta^{q-1}}$) is an arbitrary element in $\hcA_{p}^{(q-2)}$. All terms proportional to $(u-\lambda)$ in the second summand in~\eqref{kern} (corrected by the second summand in~\eqref{eq:im-secondsummand} that depends on the choice of $h_0$ and $\tilde h_0$) can be killed by the choice of $h_1$ .  This means that the quotient of the kernel~\eqref{kern} by the image~\eqref{im} for  $p\geq1$, $q\geq2$ is equal to 
\begin{equation}\label{eq:quotient}
\theta\theta^q\frac{\hcA_p^{(q-1)}[\lambda]}{(u-\lambda)\hcA_p^{(q-1)}[\lambda]+\hcA_p^{(q-2)}}.
\end{equation}

The differential $d_1$ is given by the restriction to $\hcA_p^{[q-1]}\theta\theta^q$ of the part of the differential $D_\lambda$ that sends $\hcA_d^{[d-i]}[\lambda]$ to $\hcA_{d+1}^{[d-i]}[\lambda]$, $d=p+q$, $d-i=q$, with a subsequent corestriction to the quotient~\eqref{eq:quotient}. In general, we can represent $D_\lambda(f)$, $f\in \hcA_d^{[d-i]}[\lambda]$, as a sum of three summands living in $\hcA_{d+1}^{[d-i+1]}[\lambda]$, $\hcA_{d+1}^{[d-i]}[\lambda]$, and $\hcA_{d+1}^{(d-i-1)}[\lambda]$. The first component here is exactly $d_0$, so it vanishes on the kernel of $d_0$; the quotient~\eqref{eq:quotient} implies that we disregard the third component and substitute $\lambda=u$ in the second component of that expression. Then it is straightforward to show that the second component of $D_\lambda$ applied to the representative $f\theta\theta^q$ is equal to $(D_\lambda(f)|_{\lambda=u}+\frac{q-2}{2} g\theta^1 f)\theta\theta^q$. We refer to~\cite[Lemma 7]{cps14} for a more detailed explanation.
\end{proof}


\subsection{The perturbation lemma and the second page}

We now proceed to compute the second page of the spectral sequence. In the KdV case~\cite{cps14} an explicit homotopy contraction was found, which allowed us to show that $d_1$ was acyclic everywhere except for a few explicitly computed cases. Here we will use the homological perturbation lemma to construct a homotopy contraction for $d_1$.

The following lemma is a well-known result in homological perturbation theory,  see for example~\cite{bl91}. 
\begin{lemma}
Let $(C,d)$ be a differential complex with a differential $d\colon C^k \to C^{k+1}$ given by $d = d_0 + \alpha$ where $d_0$ is also a differential. Let $h_0\colon C^k \to C^{k-1}$ be a homotopy contraction for $d_0$, i.e., $h_0 d_0 + d_0 h_0 = 1$. The formal map
\begin{equation} \label{homo}
h := \sum_{n\geq0} (-1)^n (h_0 \alpha)^n h_0,
\end{equation} 
if well-defined, is a homotopy contraction for $d$. 
\end{lemma}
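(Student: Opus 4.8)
The plan is to verify directly the homotopy identity $hd+dh=1$, treating the series \eqref{homo} as a formal geometric series in the operator $h_0\alpha$ and using the well-definedness hypothesis to manipulate it freely. I interpret ``well-defined'' as the requirement that, applied to any element of $C$, only finitely many summands of \eqref{homo} are nonzero (or, more generally, that the series converges for a topology in which composition of operators is continuous), so that the series may be rearranged, regrouped and factored at will.

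First I would set $A:=h_0\alpha$ and $B:=\alpha h_0$ and write $\Sigma:=\sum_{n\geq0}(-1)^nB^n$, $\Sigma':=\sum_{n\geq0}(-1)^nA^n$; thus $\Sigma$ and $\Sigma'$ are the formal inverses of $1+B$ and $1+A$, and the telescoping identities $(1+B)\Sigma=\Sigma(1+B)=1$ and $(1+A)\Sigma'=\Sigma'(1+A)=1$ hold. The definition of $h$ gives the two presentations $h=h_0\Sigma=\Sigma'h_0$. Expanding $0=d^2=(d_0+\alpha)^2$ and using $d_0^2=0$ yields $d_0\alpha+\alpha d_0+\alpha^2=0$, and combining this with $d_0h_0=1-h_0d_0$ gives the commutator relation $d_0B-Bd_0=-\alpha(1+B)$ directly.

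The crux is the identity
\[
d_0\Sigma=\Sigma d_0+\Sigma\alpha, \qquad\text{and, symmetrically,}\qquad \Sigma'd_0=d_0\Sigma'+\alpha\Sigma'.
\]
To prove the first one I would apply the purely algebraic identity $d_0X^{-1}-X^{-1}d_0=-X^{-1}(d_0X-Xd_0)X^{-1}$ with $X=1+B$, insert $d_0B-Bd_0=-\alpha(1+B)$, and use $(1+B)\Sigma=1$ to collapse the outer factor; alternatively, one pushes $d_0$ past each power $B^n$ by induction and rearranges the resulting double sum into a product of two series. This is the one place where the well-definedness hypothesis does real work and where care is needed — reindexing and factoring the series — so I expect it to be the main obstacle; everything afterwards is formal bookkeeping.

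Granting this, the proof closes quickly. Expanding $dh+hd=(d_0+\alpha)h_0\Sigma+\Sigma'h_0(d_0+\alpha)$ and simplifying with $d_0h_0=1-h_0d_0$, $B\Sigma=1-\Sigma$ and $\Sigma'A=1-\Sigma'$ gives $dh+hd=2-h_0d_0\Sigma-\Sigma'd_0h_0$. The commutator identity then turns $h_0d_0\Sigma$ into $h_0(\Sigma d_0+\Sigma\alpha)=h(d_0+\alpha)=hd$ and $\Sigma'd_0h_0$ into $(d_0\Sigma'+\alpha\Sigma')h_0=(d_0+\alpha)h=dh$, so that $dh+hd=2-(hd+dh)$, i.e. $hd+dh=1$. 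Apart from the commutator identity, everything uses only $d_0h_0+h_0d_0=1$, $d_0^2=d^2=0$, the two presentations of $h$, and the telescoping identities; in particular no side condition such as $h_0^2=0$ is required.
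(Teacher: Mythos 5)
The paper does not actually prove this lemma: it is stated as a known result and attributed to Barnes--Lambe \cite{bl91}, so there is no in-text argument to compare yours against. Your direct verification is correct, and every intermediate identity checks out: $d_0B-Bd_0=-\alpha(1+B)$ follows from $d^2=0$ together with $d_0h_0=1-h_0d_0$; the resolvent identity with $X=1+B$ then yields $d_0\Sigma-\Sigma d_0=\Sigma\alpha$, and its mirror $\Sigma'd_0-d_0\Sigma'=\alpha\Sigma'$ follows from $d_0A-Ad_0=(1+A)\alpha$; the two presentations $h=h_0\Sigma=\Sigma'h_0$ and the telescoping relations $B\Sigma=1-\Sigma$, $\Sigma'A=1-\Sigma'$ then give $dh+hd=2-(dh+hd)$. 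One small caveat: that last step requires dividing by $2$, so the argument as written needs coefficients in which $2$ is invertible --- harmless here, since the complexes in the paper are real vector spaces, but avoidable if you prefer: your crux identities say exactly that $\Sigma d=d_0\Sigma$ and $d\Sigma'=\Sigma'd_0$, whence $hd+dh=h_0d_0\Sigma+\Sigma'd_0h_0=(\Sigma-d_0h)+(d_0h+\alpha h)=\Sigma+\alpha h_0\Sigma=(1+B)\Sigma=1$ with no division. You are also right that no side condition such as $h_0^2=0$ is needed in this fully contractible setting ($h_0d_0+d_0h_0=1$ rather than $1-\iota\pi$), which is precisely the special case the paper uses.
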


Let us use it to compute the second page of our spectral sequence.

\begin{lemma}
The second page of the spectral sequence is given by
\begin{equation}
 E_2^{p,q} = \begin{cases}
\R[\lambda] & p=0, q=0, \\
\check{C}(\R) \theta \theta^1 & p=0, q=1 \\
C^\infty (\R) \theta \theta^1 \theta^2 & p=1, q=2 \\
0 &\text{else.}
\end{cases}
\end{equation}
The differential $d_2$ vanishes.  
\end{lemma}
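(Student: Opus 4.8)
The plan is to compute the cohomology of the complex $(E_1^{\bullet,q},d_1)$ for each fixed $q$ and then to assemble the second page. For $q=0$ and $q=1$ there is nothing to do: the only nonzero entries are $E_1^{0,0}=\R[\lambda]$ and $E_1^{0,1}=\check{C}(\R)\theta\theta^1$, on which $d_1$ vanishes (it maps them into the zero groups $E_1^{1,0}$, $E_1^{1,1}$ and reaches them only from groups in negative $p$), so they survive unchanged to $E_2$. All the content is in the range $q\geq2$, where $E_1^{p,q}=\hcA_p^{[q-1]}\theta\theta^q$ is nonzero only for $p\geq q-1$ and
\[
d_1(f\theta\theta^q)=\Bigl(D_\lambda(f)|_{\lambda=u}+\tfrac{q-2}{2}\,g\,f\,\theta^1\Bigr)\theta\theta^q ,
\]
and the claim is that its cohomology is $C^\infty(\R)\theta\theta^1\theta^2$ at $(p,q)=(1,2)$ and zero otherwise.

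First I would treat $q=2$ directly, since the correction term $\tfrac{q-2}{2}gf\theta^1$ then disappears. The space $\hcA_p^{[1]}$ is free over $C^\infty(\R)$ on $(u^1)^p$ and $(u^1)^{p-1}\theta^1$, and keeping only the monomials free of $\theta=\theta^0$ one finds, as in the kernel computation of the previous lemma (using $((u-\lambda)g)'|_{\lambda=u}=g$), that $d_1$ annihilates the $\theta^1$-part and sends $\rho\,(u^1)^p\theta\theta^2$ to a nonzero multiple of $\rho\,g\,(u^1)^p\theta^1\theta\theta^2$. Hence the kernel of $d_1$ on $E_1^{p,2}$ equals its $\theta^1$-part, which for $p\geq2$ is exactly the image of $d_1$ from $E_1^{p-1,2}$; together with $E_1^{0,2}=0$ this leaves precisely $C^\infty(\R)\theta^1\theta\theta^2=\pm\,C^\infty(\R)\theta\theta^1\theta^2$ at $p=1$ and nothing for $p\geq2$.

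The main step is the acyclicity of $(E_1^{\bullet,q},d_1)$ for $q\geq3$, and for this I would invoke the homological perturbation lemma stated above. The idea is to equip $\hcA^{[q-1]}_\bullet$ with an auxiliary grading — e.g. by the number of $u$-jet variables $u^1,\dots,u^{q-1}$ occurring, counted with multiplicity — and to observe that $d_1$ never lowers this grading, so it splits as $d_1=\delta_0+\alpha$ with $\delta_0$ the grading-preserving part (a differential) and $\alpha$ the part raising it by one. After rescaling $f\mapsto g^{c}f$ to clear $g$ out of the leading coefficient, $\delta_0$ should be a Koszul/Euler-type operator (the $\theta^1$-multiplication together with the contraction $\sum_{s}\theta^{s+1}\partial_{u^s}$) for which one can write down an explicit contracting homotopy $h_0$; then $h=\sum_{n\geq0}(-1)^n(h_0\alpha)^nh_0$ is a finite sum on each element of bounded standard degree (since $h_0\alpha$ raises the auxiliary grading, which is bounded by the standard degree), hence well defined, and contracts $d_1$. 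The hard part will be the construction and verification of $h_0$: the auxiliary grading has to be chosen so that $\delta_0$ is simultaneously explicitly contractible and dominant in $d_1$, which is exactly where the hands-on $g\equiv1$ homotopy of~\cite{cps14} must be generalized to arbitrary smooth nonvanishing $g$.

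Finally I would assemble the pages. The only nonzero entries of $E_2$ are $E_2^{0,0}=\R[\lambda]$, $E_2^{0,1}=\check{C}(\R)\theta\theta^1$ and $E_2^{1,2}=C^\infty(\R)\theta\theta^1\theta^2$. Since $d_r$ has bidegree $(r,1-r)$, out of each of these three positions it lands either in a slot with non-positive $q$-coordinate or outside the three surviving slots, and into each of them it would come from a slot with negative $p$; hence $d_r=0$ for every $r\geq2$. In particular $d_2=0$, and the spectral sequence in fact degenerates at the second page.
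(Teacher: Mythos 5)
Your overall architecture --- reduce everything to the acyclicity of $(E_1^{\bullet,q},d_1)$, treat the exceptional slot $(1,2)$ separately, and kill $d_{\geq2}$ by inspecting positions --- matches the paper's, and your direct computation of the entire column $q=2$ (the kernel is the $\theta^1$-part, and the image from $E_1^{p-1,2}$ is the nonzero multiple $\tfrac{3(p-1)}{2}g$ of that part for $p\geq2$) is correct; it is a nice elementary replacement for the paper's handling of that column. The argument that $d_r=0$ for $r\geq2$ is also fine.

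The gap is in the acyclicity for $q\geq3$, which is the entire content of the lemma in that range and which you yourself flag as ``the hard part'' left undone. Moreover, the decomposition you sketch does not work as stated: in the auxiliary grading by the number of $u$-jet variables, the grading-preserving part of $d_1$ is much larger than the operator you name --- for instance it contains the terms $s\,g\,u^1\theta^{s}\partial_{u^s}$ coming from $\partial^s((u-\lambda)g\theta^1)|_{\lambda=u}$, which trade $u^s$ for $u^1\theta^s$ and involve no $\theta^1$ at all --- while the Koszul contraction $\sum_s\theta^{s+1}\partial_{u^s}$ you put into $\delta_0$ actually \emph{lowers} that grading. So your $\delta_0$ is neither correctly identified as the leading term nor shown to be a differential with an explicit contraction. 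The paper's actual choice is different and simpler: split $d_1=\theta^1U+\theta^1V+W$, where $U=g\bigl(\sum_{s=1}^{q-1}\tfrac{s+2}{2}u^s\partial_{u^s}+\sum_{s=0}^{q}\tfrac{s-1}{2}\theta^s\partial_{\theta^s}\bigr)$ is an Euler operator. Then $\theta^1U$ is a differential contracted by $h_0=U^{-1}\partial_{\theta^1}$ (the invertibility of $U$ fails only on the weight-zero monomials $f(u)\theta^1\theta\theta^2$, i.e.\ exactly at $(p,q)=(1,2)$); the perturbation series collapses to $\sum_n(-1)^n(U^{-1}V)^nU^{-1}\partial_{\theta^1}$ because $\partial_{\theta^1}WU^{-1}\partial_{\theta^1}=0$; and its finiteness follows because $V$ strictly decreases a lexicographic order on monomials of fixed standard degree. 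Until you exhibit a concrete leading differential with an explicit contracting homotopy and a filtration in which it dominates $d_1$, the acyclicity claim for $q\geq3$ remains unproved.
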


\begin{proof}
Assume $p\geq1$, $q\geq2$. Let us split the differential $d_1:E_1^{p,q} \to E_1^{p+1,q}$ as $d_1 = \theta^1 U + \theta^1 V + W$ where $U$, $V$ and $W$ do not contain $\theta^1$ and 
\begin{equation}
U = g(u) \left( \sum_{s=1}^{q-1} \frac{s+2}2 u^s \frac{\partial }{\partial u^s} + 
\sum_{s=0}^q \frac{s-1}2  \theta^s \frac{\partial }{\partial \theta^s} \right)  
\end{equation}
(this determines $V$ and $W$ uniquely).
Clearly $g^{-1}(u) U$ is the operator that measures the degree of a monomial in $\hcA$ where the weight of each $u^s$ is $(s+2)/2$ and of $\theta^s$ is $(s-1)/2$. Notice that a degree zero monomial in $\hcA^{(q-1)}_{p} \theta \theta^q$  is possible only for $p=1$, $q=2$ and is of the form $f(u) \theta^1 \theta \theta^2$.

Since $\theta^1 U$ squares to zero, it is a differential, and we consider $d_1$ as its deformation. 
For $(p,q) \not= (1,2)$ the operator $U$ is invertible and $U^{-1} \frac{\partial }{\partial \theta^1}$ is an homotopy contraction for $\theta^1 U$. Setting $h_0 = U^{-1} \frac{\partial }{\partial \theta^1}$ and $\alpha = \theta^1 V + W$, thanks to the homotopy perturbation lemma we can conclude that formula~\eqref{homo} gives an homotopy contraction for $d_1$, as long as it is well-defined. Explicitly we have
\begin{equation}
h = \sum_{n\geq0} (-1)^n \left( U^{-1} \frac{\partial }{\partial \theta^1} ( \theta^1 V + W )  \right)^n U^{-1} \frac{\partial }{\partial \theta^1} . 
\end{equation}
In this formula we can remove $W$, since it will always appear in the combination $\frac{\partial }{\partial \theta^1} W U^{-1} \frac{\partial }{\partial \theta^1}$ which clearly vanishes. Moreover the derivative w.r.t. $\theta^1$ inside the brackets can only act on the $\theta^1$ in front of $V$. Hence we have
\begin{equation}
h = \sum_{n\geq0} (-1)^n \left( U^{-1} V \right)^{n} U^{-1} \frac{\partial }{\partial \theta^1} .
\end{equation}
To show that this is well-defined we need first to show that only a finite number of terms in $h$ are non-zero when it acts on a monomial. A simple computation shows that
\begin{multline}
V = \sum_{s=2}^{q-1} \sum_{l=1}^{s-1}  \frac{s+2}2\binom{s}{l}  (\partial^l g) u^{s-l}\frac{\partial }{\partial u^s} +\\
+ \sum_{s=1}^{q-1} \sum_{l=0}^{s-1} \frac{l-1}2  \binom{s}{l} \left(\partial^{s-l} \partial_u ((u-\lambda) g(u)) \right)_{|\lambda=u} \theta^l\frac{\partial }{\partial \theta^s} .
\end{multline}

Let us introduce a lexicographic order on the monomials in $\hcA$ by writing each monomial $\mathfrak{m}$ in the standard form
\begin{equation}
\cdots (\theta^k)^{j_k} (u^k)^{i_k} (\theta^{k-1})^{j_{k-1}} (u^{k-1})^{i_{k-1}} \cdots (\theta^1)^{j_1} (u^1)^{i_1} (\theta)^{j_0} f(u)
\end{equation}
with which we associate the multiindex $I(\mathfrak{m}) = ( \cdots, j_1, i_1, j_0)$, 
with $j_k=0,1$ and $i_k=0,1,2,\dots$. We say that $\mathfrak{m}$ is of higher lexicographic order than $\mathfrak{n}$ iff the leftmost non-zero entry of $I(\mathfrak{m}) - I(\mathfrak{n})$ is positive.

Clearly $V$ strictly decreases such lexicographic order and preserves the standard degree. Since there are only a finite number of monomials of fixed standard degree and with lexicographic order lower than that of a fixed monomial, we conclude that, when $h$ is applied to a monomial, the sum in its definition is finite. 

Finally we need to check that the inverse of $U$ in $h$ is always well-defined, that is to say that $U^{-1}$ is never acting on monomials of degree zero. This follows simply from the observation that when we apply $h$ to a monomial of strictly positive degree, the operators $\frac{\partial }{\partial \theta^1}$ and $U^{-1}$ leave its degree invariant, while $V$ can only increase the degree. We conclude that $h$ is a well-defined homotopy contraction for $d_1$ if $(p,q)\not= (1,2)$.

This allows us to conclude that $d_1$ is acyclic on $E_1^{p,q}$ for $p\geq1$, $q\geq2$, unless $(p,q) = (1,2)$. 

In the case $(p,q)=(1,2)$, the space $E_1^{1,2}=C^\infty(\R)u^1\theta\theta^2\oplus C^\infty(\R)\theta^1\theta\theta^2$. The first summand here is a part of the argument above on the homotopy contraction, while the second one lies in the kernel of $d_1$. Thus we conclude that the cohomology $E_2^{1,2}$ is equal to $C^\infty(\R)\theta^1\theta\theta^2$.

The differential $d_2\colon E_2^{p,q}\to E_2^{p+2,q-1}$ vanishes for gradation reasons: there are no pairs $(p,q)$ such that both  $E_2^{p,q}$ and $E_2^{p+2,q-1}$
are non-trivial.
\end{proof}

\subsection{The cohomology of $(\hcA[\lambda],D_\lambda)$}

For the same reason as $d_2$, all differentials on the higher pages of the spectral sequence vanish, hence it converges. 
We can then reconstruct the cohomology of the polynomial complex $(\hcA[\lambda],d_\lambda)$ from the limit term $E_\infty = E_2$ of the spectral sequence. 
\begin{proposition}
	We have:
\begin{equation} \label{lamcohom}
H(\hcA[\lambda],D_\lambda) = \R[\lambda]\oplus \check{C}(\R)\theta\theta^1 \oplus C^\infty (\R) \theta\theta^1\theta^2 .
\end{equation}	
\end{proposition}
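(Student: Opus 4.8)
The plan is to extract $H(\hcA[\lambda],D_\lambda)$ directly from the collapsed spectral sequence $E_\infty = E_2$ computed in the previous lemma, being careful about the extension problems that a priori can arise when reassembling cohomology from the associated graded pieces. Since the filtration $F^i\hcA_d[\lambda]$ is bounded (for each fixed standard degree $d$ it is a finite filtration), there are no convergence issues: the spectral sequence converges to $H(\hcA[\lambda],D_\lambda)$, and in each bidegree $(p,q)$ the limit term $E_\infty^{p,q}$ is the $p$-th graded piece of the induced filtration on $H^{p+q}$.

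First I would recall that the only nonzero entries of $E_2 = E_\infty$ are $E_2^{0,0} = \R[\lambda]$, $E_2^{0,1} = \check C(\R)\theta\theta^1$, and $E_2^{1,2} = C^\infty(\R)\theta^1\theta\theta^2$. These sit in super-degrees $p+q$ equal to $0$, $1$, and $3$ respectively, and crucially each super-degree $p+q \in \{0,1,3\}$ that appears is hit by exactly one nonzero term of $E_\infty$. Hence in every super-degree the associated graded of $H(\hcA[\lambda],D_\lambda)$ has at most one nonzero piece, so there is no extension problem: $H^0 \cong E_\infty^{0,0} = \R[\lambda]$, $H^1 \cong E_\infty^{0,1} = \check C(\R)\theta\theta^1$, $H^3 \cong E_\infty^{1,2} = C^\infty(\R)\theta^1\theta\theta^2 = C^\infty(\R)\theta\theta^1\theta^2$, and $H^k = 0$ for all other $k$. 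Taking the direct sum over all super-degrees yields exactly \eqref{lamcohom}.

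The one point that needs a little care, and which I expect to be the only genuine (if minor) obstacle, is identifying the limit term with the actual cohomology rather than just with the associated graded: one must check that the differentials on pages $r \geq 2$ all vanish, which was already argued by the same gradation reasoning used for $d_2$ (a differential $d_r\colon E_r^{p,q}\to E_r^{p+r,q-r+1}$ changes super-degree by $1$ but there are no two nonzero entries of $E_2$ in consecutive super-degrees reachable from one another — the nonzero super-degrees are $0$, $1$, $3$, and the relevant shifts land outside this set), so $E_\infty = E_2$; and that, because each super-degree contributes at most one associated-graded piece, the filtration on $H$ is trivial in the relevant sense and the iterated extensions split. I would state this explicitly as the proof, perhaps noting that one could alternatively read off representatives directly: $\R[\lambda]$ comes from constants in $u$, $\check C(\R)\theta\theta^1$ from $D_\lambda$-closed elements of the form $f(u)\theta\theta^1$ modulo the relation $g^{3/2}\R[u]$, and $C^\infty(\R)\theta\theta^1\theta^2$ from $f(u)\theta\theta^1\theta^2$, each of which is readily checked to be $D_\lambda$-closed and non-exact.
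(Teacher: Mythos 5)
Your proposal is correct and follows essentially the same route as the paper, which simply notes that all higher differentials vanish for the same gradation reasons as $d_2$ and reads off the cohomology from $E_\infty=E_2$; your explicit treatment of the (trivial) extension problem is a welcome elaboration of what the paper leaves implicit. One minor terminological point: in the paper's conventions the total degree $p+q$ of the spectral sequence is the \emph{standard} degree $d$ (the super degree is the separate $\theta$-grading), but this does not affect the validity of your argument.
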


\section{The bi-Hamiltonian cohomology}\label{sec:bi-Hamiltonian}

In this Section we use the computation of $H(\hcA[\lambda],D_\lambda)$ from the previous Section in order to compute the bi-Hamiltonian cohomology $BH^p_d(\hcF, d_1, d_2)$. 

First, we compute the cohomology of $(\hcF[\lambda],d_\lambda)$. 
\begin{proposition}
	The cohomology groups $H^p_d(\hcF[\lambda],d_\lambda)$ are isomorphic to
\begin{equation}
H^p_d(\hcF[\lambda],d_\lambda) \cong \begin{cases}
C^\infty (\R) \slash \R[u]& (p,d) = (1,1), (2,1), \\
C^\infty (\R) &(p,d) = (2,3), (3,3), \\
\R[\lambda] & (p,d) = (0,0) , \\
0 & \text{else}.
\end{cases}
\end{equation}

\end{proposition}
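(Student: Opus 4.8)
The plan is to derive $H^p_d(\hcF[\lambda],d_\lambda)$ from the already-computed $H(\hcA[\lambda],D_\lambda)$ via the long exact sequence associated to the short exact sequence of complexes
\begin{equation}
0 \to \R[\lambda] \to \hcA[\lambda] \xrightarrow{\d} \d\hcA[\lambda] \to 0,
\end{equation}
together with the observation that $\hcF[\lambda] = \hcA[\lambda]/\d\hcA[\lambda]$ fits into
\begin{equation}
0 \to \d\hcA[\lambda] \to \hcA[\lambda] \to \hcF[\lambda] \to 0.
\end{equation}
Since $D_\lambda$ commutes with $\d$, both sequences are sequences of complexes, and $\d$ is injective on $\hcA[\lambda]$ modulo constants (in each fixed standard degree $d\geq 1$ the kernel of $\d$ is trivial, and in degree $0$ it is exactly $\R[\lambda]$ sitting in $p=0$). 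So the first step is to record $H(\d\hcA[\lambda],D_\lambda)$: from the first long exact sequence, $H^p_d(\d\hcA[\lambda]) \cong H^p_d(\hcA[\lambda])$ for $(p,d)\neq(0,0)$ and the $(0,0)$ part is killed, because $\R[\lambda]$ is the full $D_\lambda$-cohomology in a range where $\d\hcA$ contributes nothing (one checks the connecting maps vanish for degree reasons — $\R[\lambda]$ lives in bidegree $(0,0)$ and there is no room). Concretely this gives $H(\d\hcA[\lambda],D_\lambda) = \check C(\R)\theta\theta^1 \oplus C^\infty(\R)\theta\theta^1\theta^2$, in bidegrees $(p,d)=(1,1)$ and $(2,3)$ respectively (note $\theta\theta^1$ has $p=2$... let me recount: $\theta=\theta^0$ has $p=1$, so $\theta\theta^1$ has $p=2$, $d=1$; and $\theta\theta^1\theta^2$ has $p=3$, $d=3$).

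Next I would feed $H(\d\hcA[\lambda],D_\lambda)$ into the long exact sequence of the second short exact sequence:
\begin{equation}
\cdots \to H^p_d(\d\hcA[\lambda]) \to H^p_d(\hcA[\lambda]) \to H^p_d(\hcF[\lambda]) \to H^{p+1}_d(\d\hcA[\lambda]) \to \cdots
\end{equation}
The standard degree $d$ is preserved by everything, so this splits over $d$, and within each $d$ the connecting homomorphism raises $p$ by one. The key structural input is that $H(\hcA[\lambda],D_\lambda)$ is concentrated in bidegrees $(p,d) = (0,0), (2,1), (3,3)$ — reindexing the Proposition's statement $\R[\lambda]\oplus\check C(\R)\theta\theta^1\oplus C^\infty(\R)\theta\theta^1\theta^2$ with the correct $p$-degrees — while $H(\d\hcA[\lambda])$ is concentrated in $(2,1)$ and $(3,3)$. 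One then chases the sequence degree by degree. In degree $d=0$: only $p=0$ survives, $H^0_0(\hcF[\lambda]) = \R[\lambda]$ since $\d\hcA$ has no degree-$0$ part and the map from $H^0_0(\hcA[\lambda])=\R[\lambda]$ is an isomorphism (nothing to quotient, nothing in the image of the connecting map). In degree $d=1$: the relevant piece is $0 \to H^1_1(\d\hcA) \to H^1_1(\hcA) \to H^1_1(\hcF) \to H^2_1(\d\hcA) \to H^2_1(\hcA) \to H^2_1(\hcF) \to H^3_1(\d\hcA) \to \cdots$; since $H^1_1(\d\hcA)=0$ and $H^1_1(\hcA)=0$, we get $H^1_1(\hcF) \hookrightarrow H^2_1(\d\hcA) = \check C(\R)\theta\theta^1$, and since $H^2_1(\hcA) = \check C(\R)\theta\theta^1$ too and $H^3_1(\d\hcA)=0$, the sequence $0\to \check C(\R)\theta\theta^1 \xrightarrow{\partial} H^2_1(\hcF) \to \check C(\R)\theta\theta^1 \xrightarrow{\cong} \check C(\R)\theta\theta^1 \to 0$... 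I need to be careful here: the map $H^2_1(\d\hcA)\to H^2_1(\hcA)$ is induced by inclusion and I expect it to be an isomorphism, forcing the connecting map $H^1_1(\hcF)\to H^2_1(\d\hcA)$ to be zero, hence $H^1_1(\hcF) = 0$? But the claimed answer is $C^\infty(\R)/\R[u]$ for $(p,d)=(1,1)$.

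This is the main subtlety and the step I expect to be the real obstacle: correctly identifying the map $H^\bullet(\d\hcA[\lambda]) \to H^\bullet(\hcA[\lambda])$ induced by inclusion. It is \emph{not} the identity — a class in $\d\hcA[\lambda]$ that is $D_\lambda$-exact in $\hcA[\lambda]$ need not be $D_\lambda$-exact within $\d\hcA[\lambda]$, and conversely the representatives differ. The cohomology $H(\d\hcA[\lambda])$ is a priori computed as $(\ker D_\lambda \cap \d\hcA[\lambda])/(D_\lambda\d\hcA[\lambda])$, and I would compute it directly rather than via the first long exact sequence if the latter proves ambiguous: namely, an element $\d\alpha \in \ker D_\lambda$ means $\d(D_\lambda\alpha)=0$, so $D_\lambda\alpha \in \R[\lambda]$, i.e. $D_\lambda\alpha = c(\lambda)$; comparing with $H(\hcA[\lambda])$ one sees $\d\alpha$ represents a nonzero class iff $\alpha$ does, \emph{unless} the "constant" obstruction $c(\lambda)$ intervenes. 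Then $\check C(\R)\theta\theta^1 = C^\infty(\R)/\R[u]g^{3/2}$, and quotienting further or extending by the $g^{3/2}$-scaling is what turns $\check C(\R)$ into $C^\infty(\R)/\R[u]$ in the final answer for $(1,1)$ and $(2,1)$. I would carry out this bookkeeping carefully, tracking how the $\R[u]g^{3/2}$ in the definition of $\check C(\R)$ interacts with passing to $\hcF$, and verify that $(2,3)$ gives $C^\infty(\R)$ (the $\theta\theta^1\theta^2$ class, unchanged since no quotient applies there) while $(3,3)$ gets a fresh $C^\infty(\R)$ from the connecting map applied to the $(2,3)$ class. The degree-$3$ part of the chase is $0\to H^2_3(\d\hcA) \to H^2_3(\hcA) \to H^2_3(\hcF) \to H^3_3(\d\hcA) \to H^3_3(\hcA) \to H^3_3(\hcF) \to H^4_3(\d\hcA)=0$, with $H^2_3(\d\hcA)=0$, $H^2_3(\hcA)=0$, $H^3_3(\d\hcA)=C^\infty(\R)\theta\theta^1\theta^2$, $H^3_3(\hcA)=C^\infty(\R)\theta\theta^1\theta^2$; if the middle map is an isomorphism then $H^2_3(\hcF)=0$ and $H^3_3(\hcF)=0$, contradicting the claim, so again the inclusion-induced map must fail to be surjective or injective in a controlled way — pinning down exactly this discrepancy (likely: $D_\lambda\d\hcA \subsetneq \d\hcA \cap D_\lambda\hcA$, with the defect measured by $\R[\lambda]$-valued primitives) is the crux, and I would model it on the analogous passage in \cite{cps14}.
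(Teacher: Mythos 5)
Your overall strategy is the same as the paper's: the paper invokes the long exact sequence (from \cite{lz13}, Lemma~3.7) relating $H(\hcA[\lambda]/\R[\lambda])$, $H(\hcA[\lambda])$ and $H(\hcF[\lambda])$, which is exactly your two short exact sequences composed, since $\d$ identifies $\hcA[\lambda]/\R[\lambda]$ with $\d\hcA[\lambda]$. However, your execution contains a genuine error that derails the whole diagram chase.

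The error is in the degree bookkeeping for $\d\hcA[\lambda]$. The derivation $\d$ raises the standard degree $d$ by one, so the isomorphism $\hcA[\lambda]/\R[\lambda]\xrightarrow{\;\d\;}\d\hcA[\lambda]$ shifts $d$: with the grading that $\d\hcA[\lambda]$ inherits as a subspace of $\hcA[\lambda]$, one has $H^p_d(\d\hcA[\lambda])\cong H^p_{d-1}(\hcA[\lambda]/\R[\lambda])$, so this cohomology is concentrated in bidegrees $(p,d)=(2,2)$ and $(3,4)$ — not $(2,1)$ and $(3,3)$ as you assert. (Equivalently: the connecting homomorphism of $0\to\d\hcA[\lambda]\to\hcA[\lambda]\to\hcF[\lambda]\to 0$ is induced by $D_\lambda$, which raises $d$ by one, so the long exact sequence does \emph{not} split over $d$ with only $p$ shifting; it reads $H^p_d(\d\hcA)\to H^p_d(\hcA)\to H^p_d(\hcF)\to H^{p+1}_{d+1}(\d\hcA)\to\cdots$.) Because of this, your degree-by-degree chase produces $H^1_1(\hcF)=0$, $H^2_3(\hcF)=0$, $H^3_3(\hcF)=0$, contradicting the statement; you correctly sense the contradiction but then misdiagnose it, speculating that the inclusion-induced map $H(\d\hcA[\lambda])\to H(\hcA[\lambda])$ must fail to be an isomorphism in some subtle way measured by $\R[\lambda]$-valued primitives. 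There is no such subtlety: with the correct grading the source and target of every inclusion-induced map $H^p_d(\d\hcA[\lambda])\to H^p_d(\hcA[\lambda])$ are never simultaneously nonzero, so these maps all vanish trivially, and the chase gives $H^1_1(\hcF)\cong H^2_2(\d\hcA)\cong\check C(\R)$ via the connecting map, $H^2_1(\hcF)\cong H^2_1(\hcA)\cong\check C(\R)$ directly, $H^2_3(\hcF)\cong H^3_4(\d\hcA)\cong C^\infty(\R)$ via the connecting map, $H^3_3(\hcF)\cong H^3_3(\hcA)\cong C^\infty(\R)$ directly, and zero elsewhere. Finally, the identification $\check C(\R)=C^\infty(\R)/\R[u]g^{3/2}\cong C^\infty(\R)/\R[u]$ is immediate from multiplication by the unit $g^{3/2}$ (recall $g$ is nowhere vanishing), so there is no extra bookkeeping needed there either.
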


\begin{proof}
The cohomology of $(\hcF[\lambda],d_\lambda)$ is related to the cohomology of $(\hcA[\lambda],d_\lambda)$ by the long exact sequence~\cite[Lemma 3.7]{lz13}:
\begin{equation}
H^p_{d-1}(\hcA[\lambda]/\R[\lambda])\to H^p_{d}(\hcA[\lambda])
\to H^p_d(\hcF[\lambda])\to H^{p+1}_d(\hcA[\lambda]/\R[\lambda]).
\end{equation}
Here $H^p_d(\hcA[\lambda]/\R[\lambda])=H^p_d(\hcA[\lambda])$ in all cases except for $p=d=0$. Since almost all cohomology groups of $\hcA[\lambda]$ and $\hcA[\lambda]/\R[\lambda]$ are equal to zero, this long exact sequence allows to compute all cohomology groups of $\hcF[\lambda]$. We refer to the proof of Proposition 13 in~\cite{cps14} for details. 
\end{proof}

\begin{theorem} \label{thm:main-theorem}
The bi-Hamiltonian cohomology groups $BH(\hcF,d_1,d_2)$ are given by
\begin{equation}
BH^p_d(\hcF,d_1,d_2) \cong \begin{cases}
C^\infty (\R) & (p,d) = (1,1), (2,1), (2,3), (3,3) , \\
\R & (p,d) = (0,0) , \\
0 & \text{else}.
\end{cases}
\end{equation}
\end{theorem}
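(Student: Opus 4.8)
The goal is to pass from the cohomology $H^p_d(\hcF[\lambda],d_\lambda)$, just computed in the previous Proposition, to the bi-Hamiltonian cohomology $BH^p_d(\hcF,d_1,d_2)$. The plan is to invoke the lemma of Barakat--Liu--Zhang (see~\cite{lz13}, and the analogous step in~\cite{cps14}) that relates these two objects. Recall that the bi-Hamiltonian complex has two anticommuting differentials $d_1,d_2$ acting on the bigraded space $\hcF^\bullet_\bullet$, and $d_\lambda=d_2-\lambda d_1$ is the associated pencil differential on $\hcF[\lambda]$; the Barakat--Liu--Zhang argument produces, from the known $H(\hcF[\lambda],d_\lambda)$, a long exact sequence (or a short one, once enough terms vanish) linking $BH^p_d(\hcF,d_1,d_2)$, $BH^{p-1}_{?}(\hcF,d_1,d_2)$ and $H^p_d(\hcF[\lambda],d_\lambda)$. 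First I would set up this exact sequence precisely, being careful about the shift in the standard degree $d$ induced by the factor of $\lambda$ (which carries degree $-1$ in the conventions of~\cite{lz13}), so that $\lambda$-multiplication on $H(\hcF[\lambda],d_\lambda)$ becomes the connecting map.

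Concretely, the key structural input is that $H(\hcF[\lambda],d_\lambda)$ is, as an $\R[\lambda]$-module, the direct sum of a free part $\R[\lambda]$ in bidegree $(0,0)$ and torsion (in fact $\lambda$-annihilated) parts $C^\infty(\R)/\R[u]$ in bidegrees $(1,1),(2,1)$ and $C^\infty(\R)$ in bidegrees $(2,3),(3,3)$. The Barakat--Liu--Zhang lemma says essentially that $BH^p_d(\hcF,d_1,d_2)$ is built from the $\lambda$-torsion of $H^p_\bullet(\hcF[\lambda],d_\lambda)$ together with the cokernel/kernel of multiplication by $\lambda$; since here every torsion class is killed by $\lambda$ itself (not merely some power), the relevant maps degenerate and one reads off $BH$ directly. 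Thus I would: (i) quote the lemma in the form ``$BH^p_d \cong (\ker\lambda)^p_d \oplus (\mathrm{coker}\,\lambda)^p_{d+1}$'' or whatever its precise statement is in~\cite{lz13}, adapting the degree bookkeeping; (ii) plug in the four torsion pieces, checking that $C^\infty(\R)/\R[u]$ contributes, after the relevant shift and possibly an extension with another copy, the full $C^\infty(\R)$ claimed in bidegrees $(1,1)$ and $(2,1)$; (iii) observe the free summand $\R[\lambda]$ at $(0,0)$ contributes exactly $\R$ (its $\lambda=0$ fibre); (iv) conclude vanishing everywhere else since $H(\hcF[\lambda],d_\lambda)$ vanishes in all other bidegrees.

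The one genuinely delicate point is step (ii): why the torsion spaces $C^\infty(\R)/\R[u]$ appearing at $(p,d)=(1,1),(2,1)$ in $H(\hcF[\lambda],d_\lambda)$ give rise to the \emph{untruncated} $C^\infty(\R)$ in $BH$, whereas at $(2,3),(3,3)$ one already has $C^\infty(\R)$ upstairs and downstairs. This is exactly the phenomenon seen in the KdV case in~\cite{cps14}: the connecting homomorphism in the long exact sequence, i.e.\ multiplication by $\lambda$ followed by a boundary map, glues the quotient $C^\infty(\R)/\R[u]$ in one bidegree to the polynomial part $\R[u]$ sitting (implicitly) in a neighbouring bidegree, reassembling $C^\infty(\R)$. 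I would therefore spell out this gluing carefully, tracking representatives through the long exact sequence exactly as in~\cite[Proposition 13 and its proof]{cps14}, and verify that the two torsion summands $(1,1)$ and $(2,1)$ pair up with the appropriate polynomial contributions to yield $C^\infty(\R)$ in each of the four advertised bidegrees. Everything else—the first-quadrant vanishing, the absence of higher $\lambda$-torsion, the triviality of all differentials—has already been established, so once the degree shifts in the Barakat--Liu--Zhang sequence are pinned down the computation is mechanical.
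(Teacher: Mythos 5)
Your proposal follows essentially the same route as the paper: both invoke the Barakat--Liu--Zhang lemma (Lemma~4.4 of~\cite{lz13}) to identify $BH^p_d(\hcF,d_1,d_2)$ with $H^p_d(\hcF[\lambda],d_\lambda)$ away from a handful of exceptional low-degree bidegrees, and both treat those exceptional cases --- precisely where the quotient $C^\infty(\R)/\R[u]$ must be upgraded to the full $C^\infty(\R)$, plus $(0,0)$ --- by a separate direct computation. The paper dispatches the exceptional bidegrees $(0,0),(1,0),(1,1),(2,1)$ with a brief explicit check rather than the long-exact-sequence bookkeeping you outline, but the substance is the same.
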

\begin{proof}
Lemma~4.4 in~\cite{lz13} identifies $BH^p_d(\hcF,d_1,d_2)$  with $H(\hcF[\lambda],d_\lambda)$ unless  $$(p,d) = (0,0), (1,0), (1,1), (2,1).$$ These four exceptional cases can be obtained by a simple explicit computation. 
\end{proof}

		
		

\section{Some remarks}\label{sec:applications}

Let us now formulate the main deformation theorem, which generalizes Theorem~1.2 in~\cite{lz13}. 

\begin{theorem}
For any  function $c(u)\in C^\infty(\R)$ there is a unique equivalence class of deformations of the Poisson pencil $(u-\lambda) g\theta\theta^1$ with a representative given by
\begin{multline}\label{eq:deformation-in-theta}
	(u-\lambda )g\theta\theta^1 + \frac{\epsilon^2}2 \left[ 6cg^2\theta\theta^3+(9c g g' u^1+ 6 c' g^2 u^1)\theta\theta^2 + \right.\\ 
	\left. + (-5c(g')^2 (u^1)^2 + c' g g'(u^1)^2+4c g g''(u^1)^2 +5c g g'u^2)\theta\theta^1\right]+ O(\epsilon^4).
	\end{multline} 
\end{theorem}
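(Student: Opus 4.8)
The plan is to read off existence and uniqueness of the deformation from Theorem~\ref{thm:main-theorem} via the standard deformation formalism, and then to produce the explicit order‑$\epsilon^2$ term by a direct computation of $\ker d_1\cap\ker d_2\cap\hcF^2_3$. By Theorem~\ref{thm:main-theorem}, $BH^2_{\geq2}(\hcF,d_1,d_2)$ equals $C^\infty(\R)$ and is concentrated in standard degree $d=3$; hence $\delta=3$, the obstruction space $BH^3_{\geq6}(\hcF,d_1,d_2)$ vanishes (indeed $BH^3_d=0$ for all $d>3$), and $BH^2_d=0$ for all $d\geq4$. By the deformation formalism of~\cite{dz01,dlz06,lz13,cps15}, vanishing of the obstruction space forces every infinitesimal bi-Hamiltonian deformation to extend to a full one, and vanishing of $BH^2_d$ for $d\geq4$ forces the extension to be unique up to the pro-unipotent Miura group; thus the equivalence classes of deformations correspond bijectively, through the order‑$\epsilon^2$ term, to $BH^2_3(\hcF,d_1,d_2)$. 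Carrying out this construction order by order — writing a deformation of the pencil as $P^{[0]}_\lambda+\sum_{k\geq1}\epsilon^kP_k$ with $P^{[0]}_\lambda=(u-\lambda)g\theta\theta^1$ and $P_k\in\hcF^2_{k+1}$, and expanding $[P_\lambda,P_\lambda]=0$ into $2\,d_\lambda P_n=-\sum_{k+l=n}[P_k,P_l]$ — one sees that for $n=2$ this says exactly $d_1P_2=d_2P_2=0$, while for odd $n$ the right‑hand side vanishes by induction and $P_n$, being $d_\lambda$‑closed in $\hcF^2_{n+1}$ of even degree $n+1$, lies in a vanishing cohomology group and can be gauged away; this is why a representative with only even powers of $\epsilon$ exists, so that the remainder is $O(\epsilon^4)$. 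Finally, by the Barakat–Liu–Zhang reduction already used in the proof of Theorem~\ref{thm:main-theorem}, the $\epsilon^2$‑term of such a representative may be chosen $\lambda$‑independent, i.e.\ as an element of $\ker d_1\cap\ker d_2\cap\hcF^2_3=BH^2_3$.

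It remains to identify this representative with the one in~\eqref{eq:deformation-in-theta}. Since $\hcA^0_1=\partial\hcA^0_0$ we have $\hcF^0_1=0$, hence $d_1d_2\hcF^0_1=0$ and $BH^2_3=\ker d_1\cap\ker d_2\cap\hcF^2_3$. Modulo $\partial\hcA$, every element of $\hcF^2_3$ has a representative
\[
f_1(u)\,\theta\theta^3+f_2(u)\,u^1\theta\theta^2+\bigl(f_3(u)\,(u^1)^2+f_4(u)\,u^2\bigr)\theta\theta^1 ,
\]
the only other bidegree‑$(2,3)$ monomial $\theta^1\theta^2 f(u)$ being removable modulo $\partial\hcA$. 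Requiring such an expression to be annihilated by $d_1$ and by $d_2$ gives a system of linear first order ODEs for $f_1,\dots,f_4$ with coefficients built from $g$ and its $u$‑derivatives; one checks that it is consistent and that its solutions, modulo $\partial\hcA$, are precisely the classes of the expressions in square brackets in~\eqref{eq:deformation-in-theta}, one for each $c\in C^\infty(\R)$ — the assignment being injective because $6cg^2$ is the coefficient of $\theta\theta^3$ and $g$ is nowhere zero. Hence taking the $\epsilon^2$‑term to be $\frac12$ times such an expression defines a bijection between $C^\infty(\R)$ and $BH^2_3$; since that expression is $d_1$‑ and $d_2$‑closed, it extends by the previous step to a full deformation of the shape~\eqref{eq:deformation-in-theta}, and combined with the bijection between deformation classes and $BH^2_3$ this proves the theorem.

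The genuinely computational point — and the main obstacle — is the last step: the two closedness conditions $d_1P_2=d_2P_2=0$ over‑determine the four unknown functions $f_1,\dots,f_4$, and one must verify that their common solutions form exactly the displayed one‑parameter family (and nothing smaller), so that in particular every deformation is equivalent to one of that form. Everything else is bookkeeping around Theorem~\ref{thm:main-theorem} and the general deformation formalism.
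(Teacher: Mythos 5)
Your deformation-theoretic frame (existence from the vanishing of $BH^3_d$ for $d\geq 4$, uniqueness and the $O(\epsilon^4)$ normal form from the vanishing of $BH^2_d$ for $d\neq 1,3$, reduction of the $\epsilon^2$-term to $\ker d_1\cap\ker d_2\cap\hcF^2_3$, and the observation that $\hcF^0_1=0$ so this kernel is the whole of $BH^2_3$) coincides with the paper's first sentence. Where you genuinely diverge is in how the explicit coefficients in~\eqref{eq:deformation-in-theta} are produced. The paper does not solve the joint cocycle conditions as an ODE system; it invokes the quasi-triviality formula of Dubrovin--Liu--Zhang and obtains the representative in closed form as $d_1\bigl(d_2[cu^1\log u^1]-d_1[ucu^1\log u^1]\bigr)$, computed in an extension of $\hcF$ by logarithms. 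That route buys two things your route has to work for: the output is automatically a joint cocycle whose class is labelled by the central invariant $c$, and one never has to argue that the solution space of the cocycle conditions is \emph{exactly} a one-parameter family. Your route is more elementary and self-contained (no appeal to~\cite{dlz06}), but the surjectivity issue you correctly flag is real: the assignment $c\mapsto(f_1,\dots,f_4)$ is only $\R$-linear (it involves $c'$), so injectivity of $c\mapsto 6cg^2$ alone does not let you conclude from $BH^2_3\cong C^\infty(\R)$ that every class is hit --- you must actually integrate the system and see that the general solution is the displayed family.

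The remaining reservation is that this integration --- which is the entire content of the explicit formula, i.e.\ the only part of the statement not already contained in Theorem~\ref{thm:main-theorem} --- is deferred in your write-up (``one checks that it is consistent and that its solutions \dots are precisely the classes \dots''). As it stands the proposal establishes the abstract existence and uniqueness statement but does not verify the specific coefficients $6cg^2$, $9cgg'+6c'g^2$, etc.; to be complete you would either have to carry out that computation or, as the paper does, exhibit the representative via the DLZ element and check it reproduces~\eqref{eq:deformation-in-theta}.
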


\begin{proof} This follows from the fact that the only non-trivial bi-Hamiltonian cohomology $H^2_d(\hcF,d_1,d_2)$ is $H^2_3(\hcF,d_1,d_2)=C^\infty(\R)\ni c$, and since for all $d\geq 4$ $H^3_{d} (\hcF,d_1,d_2)=0$, the first order deformation corresponding to $c$ is unobstructed. In order to compute $\epsilon^2$ term in the deformation explicitly, we have to apply the following formula from~\cite{dlz06}: 
	\begin{equation}\label{eq:DLZ-element}
	d_1\left(d_2 [cu^1\log u^1] -d_1 [ucu^1\log u^1]\right).
	\end{equation}
Here by $[\cdot]$ we denote the class of an expression in (some extension of) $\hcF$. Obviously, in this computation we extend a little bit the space of allowed functions with logarithm and rational functions, but they disappear in the final formula (and this is the reason why we still get a non-trivial class in the bi-Hamiltonian cohomology of $\hcF$ --- otherwise it would be a $d_1d_2$-exact element). Explicit computation of~\eqref{eq:DLZ-element} gives the $\epsilon^2$-term in~\eqref{eq:deformation-in-theta}. 
\end{proof}

An equivalent formulation can be given in the $\delta$-formalism (the correspondence between the $\theta$-formalism and the $\delta$-functions formalism for the Poisson brackets is explained in~\cite[Section 3]{lz13}): for any choice of metric $g(u)$ and of central invariant $c(u)$ there exists a unique equivalence class of dispersive Poisson pencils with a representative that at the first two orders in $\epsilon$ is equal to
\begin{align}
\label{can-form}
\{ u(x), u(y) \}_1 &= g(u) \delta'(x-y) , \\
\{ u(x), u(y) \}_2 &=  u g(u) \delta'(x-y) + \frac12 (u g(u))' u^1 \delta(x-y) + \\
&+ \epsilon^2 \Big[  3 c g^2 \delta'''(x-y) 
+ \left( \frac92 g^2 c' u^1 + 9 g g' c u^2 \right) \delta''(x-y) +\\
&\qquad + P_{2,1} \delta'(x-y) + P_{2,0} \delta(x-y) \Big] + O(\epsilon^4)
\end{align}
where 
\begin{equation}
P_{2,1} = \left( 
   8 g g' c' + 2(g')^2 c + \frac{13}{2} g g'' c + \frac32 g^2 c''
    \right) (u^1)^2 
  + \left(  \frac32 g^2 c' + 7 g g' c \right) u^2
  \end{equation}
and
\begin{align}
P_{2,0} &=  \left( \frac12 (g')^2 c' + gg'c''+ \frac{11}4 gg'' c' + \frac34 g'g'' c  +\frac74 gg''' c \right) (u^1)^3 + \\
&+ \left( 4 gg'c' + (g')^2 c + \frac{11}2 g g'' c \right) u^1 u^2 +2gg'c u^3 .
\end{align}
Here the prime denotes derivation w.r.t. the variable $u$, but when applied on the delta functions, where it denotes derivation w.r.t. $x$. 
A similar formula appears in~\cite{al11, d06} but the flat coordinate of the metric of the first dispersionless Poisson bracket is used, rather than the canonical coordinate $u$.

Notice that, while a choice of central invariant, i.e. of infinitesimal deformation, allows to reconstruct a Poisson pencil in all orders in $\epsilon$,  such full dispersive form is known explicitly only in a few important examples that we recall below.

\subsubsection*{KdV}
The KdV hierarchy admits the well-known bi-Hamiltonian structure
\begin{align}
&\{ u(x), u(y) \}_1 = \delta'(x-y) , \\
&\{ u(x), u(y) \}_2 = u(x) \delta'(x-y) + \frac12 u'(x) \delta(x-y) +\frac{\epsilon^2}8 \delta'''(x-y)  ,
\end{align}
where $u$ is indeed the canonical coordinate, i.e. $g(u)=1$.
The central invariant is also constant:
\begin{equation}
c(u) = \frac1{24} .
\end{equation}
Recall that the central invariant of a scalar Poisson pencil is given by
\begin{equation}
\label{central}
c(u) = \frac1{3g^2} (Q_2 - u Q_1),
\end{equation}
where $Q_a$ is the coefficient of $\epsilon^2 \delta'''(x-y)$ in the Poisson bracket $\{u(x),u(y)\}_a$, and $u$ the associated canonical coordinate.

\subsubsection*{Camassa-Holm} The Camassa-Holm~\cite{ch93} hierarchy can be defined by means of the bi-Hamiltonian structure given by the 
pair of Poisson structures
\begin{align}
&\{ w(x), w(y) \}_1 = \delta'(x-y) -\frac{\epsilon^2}8 \delta'''(x-y), \\
&\{ w(x), w(y) \}_2 = w(x) \delta'(x-y) + \frac12 w'(x) \delta(x-y).
\end{align}
From equation \eqref{central} the central invariant is easily computed to be equal to $c(w)=w/24$. The Poisson pencil above
is not quite in the canonical form \eqref{can-form}. To write it in canonical form, we apply the second type Miura transformation
$w=u+\frac{\epsilon}{2\sqrt{2}}u_{x}$. In this new coordinate $u(x)$, the Poisson pencil is computed to be 
\begin{align}
&\{ u(x), u(y) \}_1 = \delta'(x-y), \\
&\{ u(x), u(y) \}_2 = u(x) \delta'(x-y) + \frac12 u'(x) \delta(x-y) \\ &\hspace{2.5cm}+\frac{\epsilon^{2}}{8}\left(u\delta'''(x-y)+\frac{3}{2}u'(x)\delta''(x-y)+\frac{1}{2}u''(x)\delta'(x-y)\right)\\ 
&\hspace{4cm} +O(\epsilon^{4}).
\end{align}
From the term proportional to $\epsilon^{2}$ we can, again, using the formula \eqref{central}  compute the central invariant to agree with the 
previous value.
\subsubsection*{Volterra system}
The bi-Hamiltonian structure of the Volterra hierarchy~\cite{dlyz14} is
\begin{align}
\{ v(x), v(y) \}_1 &= \frac1\epsilon \left[ \delta(x-y+\epsilon)  - \delta(x-y-\epsilon) \right] , \\
\{ v(x) , v(y) \}_2 &= \frac1{4\epsilon}  \big[ (e^{v(x)} +e^{v(y)} )(\delta(x-y+\epsilon) - \delta(x-y-\epsilon) ) + \notag \\
&\qquad \qquad+ v(x+\epsilon) \delta(x-y+2\epsilon) - v(y+\epsilon) \delta(x-y-2\epsilon) \big] .
\end{align}
The variable $v$ is the flat coordinate for the metric associated to the dispersionless limit of the first Poisson bracket. Rewriting the Poisson brackets in the canonical coordinate $u= 4e^v$, we obtain: 
\begin{align} \label{voltcan}
\{ u(x), u(y) \}_1 &= \frac1\epsilon u(x) u(y) \left[ \delta(x-y+\epsilon)  - \delta(x-y-\epsilon) \right] , \\
\{ u(x) , u(y) \}_2 &= \frac1{4\epsilon} u(x) u(y) \big[ (u(x) +u(y) )(\delta(x-y+\epsilon) - \delta(x-y-\epsilon) ) + \notag \\
&\qquad+ u(x+\epsilon) \delta(x-y+2\epsilon) - u(y+\epsilon) \delta(x-y-2\epsilon) \big] . \label{voltcan1}
\end{align}

The dispersionless limit of the pencil $\{, \}_\lambda = \{,\}_2 - \lambda \{, \}_1$ is 
\begin{equation}
 \{ u(x), u(y) \}^{[0]}_\lambda = ( u(x) - \lambda) 2 u(x)^2 \delta'(x-y) +
 \frac12 (  ( u(x) - \lambda) 2 u(x)^2 )' \delta(x-y) ,
\end{equation}
i.e., it corresponds to the flat pencil of contravariant metrics 
\begin{equation}
2 u^3 -  2 \lambda  u^2  .
\end{equation}
Notice that  $u$ is indeed the canonical coordinate, so in this case $g(u)=2 u^2$. We can  read the coefficients $Q_1=u^2/3$, $Q_2=5u^3/6$ of $\epsilon^2 \delta'''$ from~\eqref{voltcan}-\eqref{voltcan1}, so the central invariant is 
\begin{equation}
c(u)  = \frac1{24u} .
\end{equation}

\section*{Acknowledgments}
The authors would like to thank B. Dubrovin for his interest in this work.
This work was supported by the Netherlands Organization for Scientific Research.

\appendix

\end{document}